\documentclass[12pt,oneside]{amsart}
\usepackage{amsmath,amssymb,latexsym,soul,cite,mathrsfs}
\usepackage{enumitem}
\usepackage{comment}
\usepackage{tikz}
\usetikzlibrary{decorations.pathreplacing}
\usepackage{lmodern}
\usepackage{ragged2e} 
\usepackage{color,enumitem,graphicx}
\usepackage[colorlinks=true,urlcolor=blue,
citecolor=red,linkcolor=blue,linktocpage,pdfpagelabels,
bookmarksnumbered,bookmarksopen]{hyperref}
\usepackage[english]{babel}
\usepackage{lineno}
\usepackage[left=2.4cm,right=2.4cm,top=2.5cm,bottom=2.4cm]{geometry}

\usepackage[hyperpageref]{backref}
\numberwithin{equation}{section}
\newtheorem{theorem}{Theorem}
\newtheorem{lemma}[theorem]{Lemma}

\title[Maximizers for the singular Trudinger--Moser functional]{Maximizers for the Singular Trudinger--Moser functional beyond the critical regime}

\author[J.F.\ de Oliveira]{Jos\'{e} Francisco de Oliveira}\thanks{This work was partially supported by the National Council for Scientific and Technological Development (CNPq): the first author (grant Nos. 309491/2021-5 and 303443/2025-1), the second author (grant No. 308395/2023-9), and the third author (grant No. 310885/2023-0).}
\author[M. de Souza]{Manass\'{e}s de Souza}
\author[E. Medeiros]{Everaldo Medeiros}

\address[J.F.\ de Oliveira]{
\newline\indent Department of Mathematics
\newline\indent 
	Federal University of Piau\'{i}
\newline\indent
	64049-550, Teresina, PI, Brazil}
	\email{\href{mailto:jfoliveira@ufpi.edu.br}{jfoliveira@ufpi.edu.br}}
	
	\address[M. de Souza]{
		\newline\indent Department of Mathematics
		\newline\indent Federal University of Para\'\i ba
		\newline\indent 58051-900, Jo\~ao Pessoa, PB, Brazil}
	\email{\href{mailto:manasses.xavier@academico.ufpb.br}{manasses.xavier@academico.ufpb.br}}
	
	\address[E. Medeiros]{
			\newline\indent Department of Mathematics
		\newline\indent Federal University of Para\'\i ba
		\newline\indent 58051-900, Jo\~ao Pessoa, PB, Brazil}
	\email{\href{mailto:everaldo@mat.ufpb.br}{everaldo@mat.ufpb.br}}
	
\subjclass[2010]{Primary 35J60, 35J20 Secondary 35J25, 35J65}
\keywords{Trudinger-Moser inequality, Elliptic equations,  Maximizers, Critical Points, Moser functional}

\begin{document}
\begin{abstract}
Our aim is to investigate the existence of local maximizers for the singular Trudinger-Moser functional 
$$
F_{\alpha}(u) := \int_{\Omega} \frac{e^{\alpha u^2}-1}{|x|^{a}} \mathrm{d}x,\;\;\; u \in W_0^{1,2}(\Omega),
$$ 
restricted to the manifold
$
\Sigma=:\left\{u\in W_0^{1,2}(\Omega):\|\nabla u\|_2=1\right\},
$
where $a\in [0, 2)$, $\alpha\ge 0$ and $\Omega$ denotes a smooth bounded domain  $\mathbb{R}^2$ containing the origin.   Adimurthi and Sandeep (Nonlinear.
Differ. Equ. Appl. \textbf{13}, 2007) showed the following singular Trudinger-Moser type estimate   
\begin{equation}\nonumber
    \sup_{u \in W_0^{1,2}(\Omega), \, \|\nabla u\|_{L^2} \le 1} F_\alpha(u)<\infty\;\;\;\mbox{iff}\;\;\alpha \leq \alpha_a:=2\pi(2-a).
\end{equation}
In particular,  the functional $F_{\alpha}$ is bounded on $ \Sigma$ whenever $\alpha \le \alpha_a$. In addition,   Csat\'{o} and Roy (Calc. Var. Partial Differ. Equ., \textbf{54},  2015)  were able to ensure the existence of maximizers for $F_{\alpha}$ on $\Sigma$ when $\alpha\le \alpha_a$. In the  supercritical regime $\alpha > \alpha_a$, the functional $F_{\alpha}$ becomes unbounded on $\Sigma$. Nevertheless, we prove that $F_{\alpha}$ still possesses local maximizers on $\Sigma$ beyond the critical threshold, at least for $\alpha > \alpha_a$ sufficiently close to $\alpha_a$. Our approach relies on a variational analysis near the set of maximizers associated with the critical parameter $\alpha_a$, together with a suitable local compactness argument. Our result improves and complements related findings due to Struwe (Ann. Inst. Henri Poincar\'{e}, Anal. Non Lin\'{e}aire, \textbf{5}, 1988).
\end{abstract}

\maketitle

\section{Introduction and main result} 
\noindent For a smooth bounded domain $\Omega\subset \mathbb{R}^2$, it follows from the  result of J.~Moser  \cite{Moser} (see also  \cite{Yudovich1961, Pohozaev1965, Peetre,Trudinger1967}) that
\begin{equation}\label{I-Moser}
  \sup_{u \in W_0^{1,2}(\Omega), \, \|u\|=1}
\int_{\Omega}\big(e^{\alpha u^{2}} - 1\big)\, \mathrm{d}x<\infty \quad\mbox{iff}\;\; \alpha\le 4\pi,
\end{equation}
where  $W_0^{1,2}(\Omega)$ is the Sobolev space endowed with the Dirichlet norm
$
\|u\| = \left(\int_\Omega |\nabla u|^2 \, \mathrm{d}x\right)^{{1}/{2}}.
$ 
Estimates such as \eqref{I-Moser} are currently known as Trudinger-Moser type inequalities and are fundamental to the study of nonlinear differential equations in geometric analysis and physics, see for instance  \cite{Chang87, Baird, Moser}. Their importance has led to extensive research on generalizations and variants; for a broad overview, we refer to \cite{CR,Adams, LamLu2,AS} and the references therein.

A central problem in geometric analysis involves studying the critical points of the functional 
\begin{equation}\label{F-Moser}
J(u) = \int_{\Omega} (e^{u^2} - 1) \mathrm{d}x
\end{equation}
constrained to the manifold $M_\alpha = \{u \in W_0^{1,2}(\Omega) : \|u\|^2 = \alpha\}$. For $\alpha\le 4\pi$,  the attainability of \eqref{I-Moser} proved by Flucher \cite{Flucher} (see also \cite{Carleson-Chang,Struwe,Lin1996}) ensures a critical point for $J_{|_{M_{\alpha}}}$. When $\Omega=B$ is the unit ball in $\mathbb{R}^2$, although $J$ is unbounded in $M_{\alpha}$ for $\alpha>4\pi$,   Malchiodi and Martinazzi \cite{MM} were able to show that there exists $\alpha^{\sharp}>4\pi$  for which  $J_{|_{M_\alpha}}$ has at least two positive critical points for $\alpha \in (4\pi, \alpha^{\sharp})$, but no positive critical points for $\alpha > \alpha^{\sharp}$. This result improves a previous one due to Struwe \cite{Struwe} and to Lamm, Robert, and Struwe \cite{LRS}. Recently, da Silva-Paiva and de Oliveira explored the existence of critical points beyond the critical threshold for the logarithmic weight Trudinger-Moser functional, see \cite{CR,Ruan}.  For a general domain $\Omega \subset \mathbb{R}^2$, it was shown in \cite{PMR} that the topology of $\Omega$ has a strong influence on the existence of critical points for $J_{|_{M{\alpha}}}$.

A significant extension of the Trudinger-Moser inequality \eqref{I-Moser} for singular weights was proved by Adimurthi and Sandeep \cite{AS}. Precisely, let $\Omega\subset\mathbb{R}^2$ be a bounded domain containing the origin. Then, for every $a \in [0, 2)$ and $\alpha>0$, the singular Moser's functional 
\begin{equation}\label{SF-Moser}
F_{\alpha}(u) = \int_{\Omega} \frac{e^{\alpha u^2}-1}{|x|^a} \mathrm{d}x,\;\; u\in W^{1,2}_0(\Omega)
\end{equation}
is well-defined. Furthermore, it is bounded on $\Sigma=\{u\in W^{1,2}_0(\Omega)\;:\; \|u\|^2 = 1\}$ if and only if  $\alpha\le \alpha_a:=2\pi(2-a)$. For  $\alpha\le \alpha_a$, the existence of a (global) maximizer for $F_{\alpha}$ on $\Sigma$ is established in \cite{Roy} and consequently $(F_{\alpha})_{|_{\Sigma}}$ admits critical points on this range of $\alpha$.

The purpose of this paper is to establish the existence of local maximizers and, subsequently, of critical points of $F_{\alpha}$ restricted to $\Sigma$ beyond the critical threshold, namely, for $\alpha>\alpha_{a}$. Specifically, we prove the following result. 
\begin{theorem}[Local Maximizer]\label{S-max} There exists $\alpha^{*} > \alpha_a$ such that for any $\alpha \in (\alpha_a, \alpha^*)$, the  functional $F_{\alpha}$ in \eqref{SF-Moser} admits a local maximizer in $\Sigma$.
\end{theorem}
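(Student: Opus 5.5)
The plan follows Struwe's strategy: look for a maximizer of $F_\alpha$ inside a small neighbourhood of the set of critical maximizers, and show that for $\alpha$ slightly above $\alpha_a$ it lies in the interior. Let
$$
K:=\{u\in\Sigma:\ F_{\alpha_a}(u)=S_a\},\qquad S_a:=\sup_{\Sigma}F_{\alpha_a},
$$
which is nonempty by the Csat\'o--Roy result \cite{Roy}. For $\delta>0$ set $U_\delta:=\{u\in\Sigma:\operatorname{dist}(u,K)<\delta\}$, with distance in $W_0^{1,2}(\Omega)$.

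The first step is to show that $K$ is compact in $W_0^{1,2}(\Omega)$. Take $u_k\in K$ with $u_k\rightharpoonup u$. By Lions' concentration-compactness alternative for the singular functional, either $u\neq 0$ (possibly a concentration at a point), or $u_k$ concentrates at a point with $F_{\alpha_a}(u_k)\to$ a concentration level. The proof of \cite{Roy} shows that $S_a$ is strictly larger than every concentration level. Hence $u\neq0$, no concentration occurs, $F_{\alpha_a}(u)=S_a$, and this forces $\|u\|=1$, so the convergence is strong. The same argument gives a quantitative gap: there is $\delta_0>0$ and $\eta>0$ such that
$$
\sup\{F_{\alpha_a}(u):u\in\Sigma,\ \operatorname{dist}(u,K)=\delta\}\le S_a-\eta(\delta)
$$
for $0<\delta<\delta_0$. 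If this failed, a maximizing sequence at distance exactly $\delta$ from $K$ would, by the compactness just described, converge strongly to a maximizer at distance $\delta$ from $K$, a contradiction.

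The second step, which I expect to be the main obstacle, is local compactness for $\alpha>\alpha_a$. I need a uniform bound: for some $\delta<\delta_0$ and $\alpha$ close to $\alpha_a$, $\sup_{U_\delta}F_\alpha<\infty$, and maximizing sequences in $\overline{U_\delta}$ do not concentrate. Write $u=w+v$ with $w\in K$ and $\|v\|<\delta$. By compactness of $K$ and $w\in L^\infty$ (elliptic regularity for the Euler--Lagrange equation) there is a uniform $L^\infty$ bound $M$ on $K$. Then $u^2\le(1+\epsilon)v^2+C_\epsilon M^2$, so
$$
\int_\Omega\frac{e^{\alpha u^2}}{|x|^a}\,\mathrm{d}x\le e^{\alpha C_\epsilon M^2}\int_\Omega\frac{e^{\alpha(1+\epsilon)\delta^2 (v/\|v\|)^2}}{|x|^a}\,\mathrm{d}x,
$$
which the Adimurthi--Sandeep inequality bounds uniformly once $\alpha(1+\epsilon)\delta^2\le\alpha_a$. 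The same estimate shows that $\{e^{\alpha u_k^2}|x|^{-a}\}$ is bounded in some $L^p$ with $p>1$ along sequences in $U_\delta$. Vitali's theorem then gives that $F_\alpha$ is weakly sequentially continuous on $\overline{U_\delta}$. Since $\overline{U_\delta}\cap\Sigma$ is not weakly closed, I will maximize $F_\alpha$ over the weakly closed set $\{u:\|u\|\le1,\ \operatorname{dist}(u,K)\le\delta\}$, which is not convex. Rather than relying on convexity, I will take a maximizing sequence and use the decomposition $u_k=w_k+v_k$ with $w_k\to w$ along a subsequence. This gives weak limits $u=w+v$ with $\|v\|\le\liminf\|v_k\|\le\delta$, so the limit stays in the constraint set, and $F_\alpha(u)=\lim F_\alpha(u_k)$. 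By monotonicity of $F_\alpha$ under scaling $t u$, $t\ge1$ (if that keeps us in the set), we may take $\|u\|=1$; otherwise, handle this via the gap in the next step.

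The third step is to compare the boundary and interior values of $U_\delta$. By the uniform integrability above, $F_\alpha\to F_{\alpha_a}$ uniformly on $\overline{U_\delta}$ as $\alpha\downarrow\alpha_a$. Hence there is $\alpha^*>\alpha_a$ such that for $\alpha\in(\alpha_a,\alpha^*)$,
$$
\sup_{\partial U_\delta}F_\alpha<S_a-\eta/2<\inf_K F_\alpha,
$$
using $F_\alpha\ge F_{\alpha_a}=S_a$ on $K$. The maximizer $u_\alpha$ from the second step therefore lies at distance strictly less than $\delta$ from $K$. Moreover $\|u_\alpha\|=1$: if $\|u_\alpha\|<1$, then $t u_\alpha$ with $t>1$ slightly larger would still be in the set and would have a strictly larger value. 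Thus $u_\alpha$ is an interior point of $U_\delta$ in $\Sigma$ and is a local maximizer of $F_\alpha|_\Sigma$. The delicate points are the uniformity in the second step, namely the $L^\infty$ bound on $K$ and the choice $\delta^2(1+\epsilon)\alpha<\alpha_a$, and the exclusion of concentration at the critical level; the latter is imported from \cite{Roy}.
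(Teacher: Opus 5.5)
Your architecture is the paper's. You get compactness of $K$ and a gap for $F_{\alpha_a}$ away from $K$ from Csat\'o--Roy, perturb to $\alpha>\alpha_a$, and obtain local compactness by splitting $u$ into a bounded function plus a remainder of norm about $\delta$, then applying Adimurthi--Sandeep and Vitali. Two things differ: you maximize over the weakly closed relaxed set $A=\{\|u\|\le 1,\ \mathrm{dist}(u,K)\le\delta\}$, and you get $\alpha$-uniformity by a direct integral estimate. The paper instead maximizes over $N_\varepsilon\subset\Sigma$, normalizes the weak limit, and gets $\alpha$-uniformity from $F_\beta(u)=F_\alpha(\sqrt{\beta/\alpha}\,u)$ plus local uniform continuity.

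As written, your third step has a genuine hole. The inequality $\sup_{\partial U_\delta}F_\alpha<\inf_K F_\alpha$ only controls points of $\Sigma$ at distance $\delta$. Your maximizer $u_\alpha$ lives in $A$, which also contains points with $\|u\|<1$ and $\mathrm{dist}(u,K)=\delta$. For such a point neither argument applies. It is not on $\partial U_\delta$, and scaling $tu_\alpha$ with $t>1$ can push the distance above $\delta$ and leave $A$. Your proof that $\|u_\alpha\|=1$ assumes $\mathrm{dist}(u_\alpha,K)<\delta$, and your proof of that assumes $u_\alpha\in\Sigma$. So the argument is circular exactly in the case you deferred from step two.

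There are two easy repairs.
\begin{itemize}
\item[(i)] Prove the gap on the whole relative boundary of $A$. Suppose $\|u_k\|\le1$, $\mathrm{dist}(u_k,K)=\delta$ and $F_{\alpha_a}(u_k)\to S_a$. Then $\|u_k\|\ge 1-\delta$, and $z_k=u_k/\|u_k\|$ is again maximizing in $\Sigma$, so $z_k\to w\in K$ strongly. Passing to $\|u_k\|\to t$ gives $F_{\alpha_a}(tw)=S_a$, hence $t=1$ and $u_k\to w$, a contradiction. Uniform convergence $F_\alpha\to F_{\alpha_a}$ on $A$ then forces $\mathrm{dist}(u_\alpha,K)<\delta$, and only after that does your scaling argument give $\|u_\alpha\|=1$.
\item[(ii)] Follow the paper. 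Prove the gap on the annulus $\{u\in\Sigma:\ \delta\le \mathrm{dist}(u,K)\le 2\delta\}$ and use $\|u/\|u\|-w\|\le 2\|u-w\|$ for $w\in K$. Then $u_\alpha/\|u_\alpha\|$ is either in $A$, contradicting maximality if $\|u_\alpha\|<1$, or in the annulus, where $F_\alpha<\inf_K F_\alpha\le F_\alpha(u_\alpha)$.
\end{itemize}

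Secondary point: compactness of $K$ together with $w\in L^\infty$ for each $w$ does not by itself give a uniform bound $M$, because $\|\cdot\|_{L^\infty}$ is not continuous on $W^{1,2}_0(\Omega)$. Getting $M$ from the Euler--Lagrange equation needs uniform $L^p$ control of $w e^{\alpha_a w^2}|x|^{-a}$ over $K$, which again requires an argument. You can avoid $M$ entirely. Take a finite $\epsilon'$-net of $K$ made of bounded (even smooth) functions $\phi_i$, and split $u=\phi_i+(u-\phi_i)$ with $\|u-\phi_i\|\le\delta+\epsilon'$. This gives the uniform exponential bounds your second and third steps need, without the regularity lemma. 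The paper sidesteps $M$ differently: along a maximizing sequence it uses only the single limit $v\in K$ and sets $w_m=u_m-v_m+v$. With these repairs your route is valid.
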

We point out that, when $a=0$, Theorem~\ref{S-max} recovers the result of Struwe \cite{Struwe}. Related results for the logarithmic Trudinger--Moser functional \cite{CR} have recently been obtained in \cite{Ruan}. To the best of our knowledge, the present work provides a first step toward the investigation of the existence of critical points beyond the critical constant for the singular Trudinger--Moser functional, in the same spirit as \cite{PMR,MM}.

\medskip

\textit{The outline of the paper is as follows:} In Section 2 we present some auxiliary results on the regularity and differentiability of the functional \(F_\alpha\). In Section 3 we study the critical case $\alpha_a$, proving the compactness of the set of maximizers and establishing a local variational structure. We then extend these arguments to the supercritical regime $\alpha > \alpha_a$ (close to $\alpha_a$) and complete the proof of the main result.

\section{Auxiliary results}
In this section, we establish some auxiliary results that will be used in the proof of Theorem~\ref{S-max}.
\begin{lemma}\label{Lema-Frechet} 
	For $\alpha>0$, let $F_{\alpha}: W^{1,2}_{0}(\Omega)\to \mathbb{R}$ be defined by \eqref{SF-Moser}. Then $F_{\alpha}$ is  continuously Fr\'{e}chet differentiable with
	$$ 
	F_\alpha^\prime(u)\cdot v = 2\alpha\int_{\Omega} \frac{ue^{\alpha u^2}v}{|x|^a}\mathrm{d}x,\quad  \forall v\in W^{1,2}_{0}(\Omega),
	$$
for any $u\in W^{1,2}_{0}(\Omega)$. Furthermore,  $F^{\prime}_{\alpha}: W^{1,2}_{0}(\Omega)  \to  W^{-1,2}(\Omega) $ is locally uniformly continuous and bounded.
	\end{lemma}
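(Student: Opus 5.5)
The plan is to reduce everything to a single integrability estimate and then run the standard Nemytskii-type argument. The basic tool is the Adimurthi--Sandeep inequality together with the elementary bound $u^2\le (1+\varepsilon)(u-w)^2+(1+\varepsilon^{-1})w^2$.

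\textbf{Step 1: local exponential integrability.} Fix $u\in W_0^{1,2}(\Omega)$. Given $\alpha>0$, $p\ge1$ and $R>0$, I claim that $e^{\alpha v^2}|x|^{-a}\in L^p$ uniformly for $v$ in a small ball around $u$. Precisely, there is $\delta>0$ such that $\sup_{\|v-u\|<\delta}\int_\Omega e^{p\alpha v^2}|x|^{-ap'}dx<\infty$ whenever the exponents are chosen so that $ap'<2$.

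To prove it, approximate $u$ by $w\in C_c^\infty(\Omega)$ with $\|u-w\|<\delta$. Then
$$
v^2\le(1+\varepsilon)(v-w)^2+C_\varepsilon\|w\|_\infty^2 ,
$$
and $\|v-w\|<2\delta$. Hence $p\alpha(1+\varepsilon)(v-w)^2\le p\alpha(1+\varepsilon)4\delta^2\,(v-w)^2/\|v-w\|^2$. Choosing $\delta$ small makes the constant in front of $(v-w)^2/\|v-w\|^2$ less than $\alpha_b$ for any fixed $b<2$. Adimurthi--Sandeep with weight $|x|^{-b}$ then gives a uniform bound, and H\"older's inequality splits the singular weight between the factors. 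For $a\in[0,2)$ I would choose $q>1$ close to $1$ with $aq<2$. The estimate I aim for is
$$
\int_\Omega \frac{e^{\alpha v^2}|v|\,|\phi|}{|x|^a}\,dx\le \Big(\int_\Omega \frac{e^{q\alpha v^2}}{|x|^{aq}}\Big)^{1/q}\|v\phi\|_{L^{q'}},
$$
and the last factor is controlled by Sobolev embedding in 2D, $\|v\phi\|_{q'}\le C\|v\|\,\|\phi\|$. Taking $\phi=v$ or $\phi=1$ in the same way also shows that $F_\alpha$ is finite and locally bounded.

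\textbf{Step 2: G\^ateaux derivative and continuity of $F'_\alpha$.} By the mean value theorem,
$$
\frac{e^{\alpha(u+tv)^2}-e^{\alpha u^2}}{t}=2\alpha\,\xi e^{\alpha\xi^2}v
$$
with $\xi$ between $u$ and $u+tv$. For $|t|\le1$ small this is dominated by $(|u|+|v|)e^{\alpha(|u|+|v|)^2}|v|$. After dividing by $|x|^a$ the dominating function is integrable by Step 1, applied to the function $|u|+|v|$. Dominated convergence then yields the stated formula for $F'_\alpha(u)\cdot v$, which is a bounded linear functional on $W_0^{1,2}(\Omega)$ by the H\"older estimate above.

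For continuity, let $u_n\to u$ in $W_0^{1,2}(\Omega)$. I would estimate
$$
\sup_{\|\phi\|\le1}\Big|\int_\Omega \frac{(g(u_n)-g(u))\phi}{|x|^a}\,dx\Big|,\qquad g(s)=se^{\alpha s^2}.
$$
By H\"older this is at most $\||x|^{-a/q}(g(u_n)-g(u))\|_{L^{q}}\,C$ with $q$ as in Step 1. Pass to an a.e.\ convergent subsequence. The uniform $L^{q_1}$ bounds for some $q_1>q$ coming from Step 1 give uniform integrability, so Vitali's theorem gives convergence to zero. Since the limit does not depend on the subsequence, the whole sequence converges. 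Continuity of the G\^ateaux derivative then implies continuous Fr\'echet differentiability.

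\textbf{Step 3: local boundedness and local uniform continuity.} Boundedness on a ball $B_\delta(u)$ follows directly from the uniform bound in Step 1. For uniform continuity on $B_\delta(u)$, I would use $|g(s)-g(t)|\le |s-t|\,C(1+s^2+t^2)e^{\alpha(s^2+t^2)}$. Combined with H\"older, the uniform integrability from Step 1 (on a slightly smaller ball, with larger exponent) and Sobolev embedding for $s-t=v_1-v_2$, this gives
$$
\|F'_\alpha(v_1)-F'_\alpha(v_2)\|\le C\|v_1-v_2\| .
$$
So $F'_\alpha$ is locally Lipschitz, hence locally uniformly continuous.

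\textbf{Main obstacle.} The main difficulty is Step 1: choosing $\delta$ and the H\"older exponents compatibly so that both constraints hold at once. The exponential constant must stay below $2\pi(2-b)$, and the weight exponent must satisfy $b<2$. My fallback is to give all the room in the weight to the exponential factor, for example $b=(a+2)/2$, which makes both conditions easy to meet.
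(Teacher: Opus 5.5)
Your proposal is correct. Its skeleton matches the paper's: a G\^ateaux derivative obtained from a convergence theorem, then H\"older's inequality combined with the Adimurthi--Sandeep inequality at a small coefficient, then G\^ateaux differentiability plus continuity of $F'_\alpha$ giving $C^1$. The technical core differs in a useful way.

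The paper writes a nearby point as $u+\varepsilon h$ and uses $(u+\varepsilon h)^2\le 2u^2+2\varepsilon^2h^2$. It absorbs the $u$-part through the qualitative fact that $e^{\beta u^2}\in L^1(\Omega)$ for every $\beta>0$, which is the ``well-defined for every $\alpha$'' part of the Adimurthi--Sandeep result. It uses uniform $L^q$ bounds and Vitali's theorem for the G\^ateaux limit. Its continuity estimate $\|F'_\alpha(u+\varepsilon h)-F'_\alpha(u)\|_\ast\le C\varepsilon$ is measured from the centre $u$, and $C$ depends on $u$ through $\int_\Omega e^{\beta u^2}\,dx$.

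You instead approximate $u$ by a bounded $w\in C_c^\infty(\Omega)$ and split with a $(1+\varepsilon)$ factor. The large part is then absorbed by $\|w\|_\infty$ and the small part by the singular inequality. This reproves the qualitative integrability rather than citing it. More importantly, it gives bounds uniform over a whole ball $B_\delta(u)$, so you obtain $\|F'_\alpha(v_1)-F'_\alpha(v_2)\|\le C\|v_1-v_2\|$ for arbitrary $v_1,v_2$ in that ball. That is local uniform continuity in the strict sense. Read literally, the paper's centred estimate gives continuity at each point; extending it to pairs of points near $u$ needs exactly the uniform bound your Step~1 supplies. Together with local boundedness of $F'_\alpha$ on a convex ball, this also yields the uniform continuity of $F_\alpha$ on the balls $U(v_i;\varepsilon_i)$ that the paper uses later.

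Your dominated-convergence argument with the explicit majorant $2\alpha(|u|+|v|)e^{\alpha(|u|+|v|)^2}|v|\,|x|^{-a}$ is also simpler than the paper's Vitali route.

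One cosmetic point: the statement of your Step~1 has garbled exponents (the weight $|x|^{-ap'}$ and an unused $R$). What you actually prove is $\sup_{\|v-u\|<\delta}\int_\Omega e^{p\alpha v^2}|x|^{-b}\,dx<\infty$ for any $p\ge 1$ and $b<2$, with $\delta$ depending on $p,\alpha,b$. You should state it that way.
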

\begin{proof}
We will apply the Vitali convergence theorem. First, for $u, v \in W^{1,2}_{0}(\Omega)$ and $(s_m)$ any positive real sequence with $s_m\to 0$, we observe that
\begin{equation}\label{incremento}
	\frac{F_\alpha(u+s_mv)-F_\alpha(u)}{s_m} = \int_{\Omega}f_m(x)\mathrm{d}x, 
\end{equation}
where $f_m:\Omega\to\mathbb{R}$ is given by
\begin{equation*}
f_{m}(x) = \frac{1}{s_m}\int_{0}^{s_m} \frac{2\alpha v(u+tv) e^{\alpha(u+tv)^2}}{|x|^a} \mathrm{d}t.
\end{equation*}
We claim that $(f_m)$ converges strongly to $f$ in $L^1(\Omega)$ where $f:\Omega\to\mathbb{R}$ is defined by
\begin{equation*}
f(x) = \frac{2\alpha uv e^{\alpha u^2}}{|x|^a}.
\end{equation*}
In fact, we have $f_{m}(x)\to f(x)$ a.e. in $\Omega$. In addition, by Hölder's inequality and the singular Trudinger-Moser inequality \eqref{SF-Moser}, we get $f\in L^1(\Omega)$. In order to ensure the uniform integrability of $(f_m)$, it is sufficient to show that $(f_m)$ is a bounded sequence in $L^{q}(B)$ for some $q>1$. Since $a \in [0, 2)$, we can choose $q>1$ such that $aq < 2$. By Hölder's inequality with respect to $t$, we get
\begin{align*}
|f_m(x)|^{q} \leq \frac{1}{s_{m}}\int_{0}^{s_m} \frac{(2\alpha)^q |v|^q |u+tv|^q e^{q\alpha(u+t v)^2}}{|x|^{aq}} \mathrm{d}t,\quad \forall x \in \Omega.
\end{align*}
Since $\tau^q e^{\sigma \tau^2} \le c e^{c\tau^2}, \tau\ge 0$ for some $c>0$ depending only on $q$ and $\sigma$, using the elementary inequality $(a+b)^2 \le 2a^2 + 2b^2$,  for any $t\in [0, s_m]$ it follows 
\begin{equation}\label{estimate-JL}
\begin{aligned}
|u+t v|^q e^{q\alpha|u+tv|^2} &\leq c e^{c|u+tv|^2}\leq c e^{2cu^2} e^{2cs_m^2v^2}.
\end{aligned}
\end{equation}
Let $\gamma>1$ such that $aq\gamma<2$.  For $s_m$ sufficiently small, we have $2c s_m^2\gamma\|v\|^2 \le \alpha_a$. By applying the generalized Hölder's inequality to the product of $|v|^q$, $e^{2c|u|^2}$, and $e^{2c s_m^2|v|^2}/|x|^{aq}$, and using the singular Trudinger-Moser inequality \eqref{SF-Moser}, we obtain
\begin{equation}\label{unifboun}
\begin{aligned}
\int_{\Omega}|f_m|^q\mathrm{d}x &\le \frac{c (2\alpha)^q}{s_{m}}\int_{0}^{s_m}\int_{\Omega} |v|^{q} e^{2cu^2} \frac{e^{2cs_m^2v^2}}{|x|^{aq}} \mathrm{d}x\mathrm{d}t \\
&\le  c (2\alpha)^q\Big(\int_{\Omega}|v|^{\frac{2q\gamma}{\gamma-1}}\mathrm{d}x\Big)^{\frac{\gamma-1}{2\gamma}}\Big(\int_{\Omega}e^{\frac{4c\gamma}{\gamma-1}u^2}\mathrm{d}x\Big)^{\frac{\gamma-1}{2\gamma}}\Big(\int_{\Omega}\frac{e^{2\gamma cs_m^2v^2}}{|x|^{aq\gamma}} \mathrm{d}x\Big)^{\frac{\gamma-1}{\gamma}}\\
&\le C,
\end{aligned}
\end{equation}
for some $C>0$ independent of $m$.  Hence, from Vitali's convergence theorem, our claim holds. By dividing \eqref{incremento} by $s_m$ and letting $m\to\infty$, we obtain the Gâteaux derivative
$$
F_\alpha^\prime(u)\cdot v = 2\alpha\int_{\Omega} \frac{ u v e^{\alpha u^2}}{|x|^a}\mathrm{d}x.
$$
It remains to prove that $F^{\prime}_{\alpha}: W^{1,2}_{0}(\Omega) \to W^{-1,2}(\Omega)$ is locally uniformly continuous and bounded. First, we show local boundedness. Let $u \in W^{1,2}_{0}(\Omega)$ be arbitrarily fixed and let $U(u;\varepsilon)=\{w\in W^{1,2}_{0}(\Omega) : \|w-u\|<\varepsilon\}$. For $w\in U(u;\varepsilon)$, we set $h=(w-u)/\varepsilon$. From  Hölder's inequality and arguing as in \eqref{estimate-JL} and \eqref{unifboun}, we have
\begin{equation}\label{F-bounded}
	\begin{aligned}
		|F^\prime_{\alpha}(w)\cdot v|
		&\leq \int_{\Omega} \frac{2\alpha|v||u+\varepsilon h| e^{\alpha|u+\varepsilon h|^2}}{|x|^a}\,\mathrm{d}x\\
		&\leq C \int_\Omega |v| e^{c u^2} \frac{e^{c\varepsilon^2 h^2}}{|x|^a} \,\mathrm{d}x\\
		&\leq C 
		\Big(\int_\Omega |v|^{\frac{2\gamma}{\gamma-1}}\,\mathrm{d}x\Big)^{\frac{\gamma-1}{2\gamma}}
		\Big(\int_\Omega e^{\frac{2c\gamma}{\gamma-1}u^2}\,\mathrm{d}x\Big)^{\frac{\gamma-1}{2\gamma}}
		\Big(\int_\Omega \frac{e^{2c\gamma \varepsilon^2 h^2}}{|x|^{a\gamma}}\,\mathrm{d}x\Big)^{\frac{1}{\gamma}},
	\end{aligned}
\end{equation}
for appropriate exponents $\gamma > 1$ with $a \gamma < 2$. By choosing $\varepsilon>0$ small enough such that $2c \varepsilon^2 \gamma \leq \alpha_a$, the last integral is bounded by the singular Trudinger-Moser inequality \eqref{SF-Moser}. Thus, taking the supremum over $\|v\| = 1$ and using the continuous embedding  $W^{1,2}_{0}(\Omega)\hookrightarrow L^{p}(\Omega)$ for all $p\ge 1$, we can see from \eqref{F-bounded} that $F^\prime_\alpha$ is locally bounded. Now, for any $v\in W^{1,2}_{0}(\Omega)$ with $\|v\|\le 1$, we evaluate the difference
\begin{equation}\label{diferençaI}
\begin{aligned}
|F_\alpha^\prime(u+\varepsilon h)\cdot v-F_\alpha^\prime(u)\cdot v| &= \Big| \int_\Omega\frac{2\alpha v}{|x|^a} \int_{0}^{\varepsilon} \frac{d}{dt}\Big((u+th)e^{\alpha|u+th|^2}\Big)\mathrm{d}t\mathrm{d}x\Big|\\
& \le \int_{0}^{\varepsilon}\int_\Omega\frac{2\alpha|v||h| e^{\alpha|u+th|^2}}{|x|^a}\mathrm{d}t\mathrm{d}x\\
& + \int_{0}^{\varepsilon}\int_\Omega \frac{4\alpha^2|v||h||u+th|^2 e^{\alpha|u+th|^2}}{|x|^a}\mathrm{d}t\mathrm{d}x.
\end{aligned}
\end{equation}
Let us estimate the first integral in \eqref{diferençaI}. Similarly to \eqref{estimate-JL}, for all $t\in [0, \varepsilon]$ we obtain
\begin{equation}\label{I-part1}
\begin{aligned}
\int_\Omega \frac{|v||h| e^{\alpha|u+th|^2}}{|x|^a}\mathrm{d}x &\leq \int_\Omega |v||h| e^{2\alpha u^2} \frac{e^{2\alpha\varepsilon^2h^2}}{|x|^a}\mathrm{d}x\\
&\leq \|v\|_{L^{\frac{3\gamma}{\gamma-1}}} \|h\|_{L^{\frac{3\gamma}{\gamma-1}}}\left(\int_{\Omega}e^{\frac{6\alpha}{\gamma-1} u^2}\mathrm{d}x\right)^{\frac{\gamma-1}{3\gamma}}
 \left(\int_\Omega \frac{e^{2\varepsilon^2\alpha \gamma h^2}}{|x|^{a\gamma}}\mathrm{d}x\right)^{\frac{1}{\gamma}}\\
&\le C\left(\int_{\Omega}e^{\frac{6\alpha}{\gamma-1} u^2}\mathrm{d}x\right)^{\frac{\gamma-1}{3\gamma}},
\end{aligned}
\end{equation}
for $\gamma>1$ such that $\gamma a<2$ and $\varepsilon>0$ is small such that $2\varepsilon^2\gamma\alpha\le \alpha_{a}$, we obtain that the $h$ exponential term is bounded. The second integral in \eqref{diferençaI} can be estimated analogously. Combining these estimates and integrating over $t \in [0, \varepsilon]$, for $\varepsilon>0$ small enough \eqref{diferençaI} yields
\begin{equation*}
\|F_\alpha^\prime(u+\varepsilon h)-F_\alpha^\prime(u)\|_{\ast} \leq C \varepsilon,
\end{equation*}
for some constant $C>0$ depending only on $\alpha, a,$ and $u$. Therefore, $F^\prime_\alpha$ is locally uniformly continuous, which, combined with the existence of the Gâteaux derivative, ensures that $F_{\alpha}$ is continuously Fréchet differentiable.\end{proof}
	\begin{lemma}\label{regularity}
		If $u \in W^{1,2}_{0}(\Omega) $ weakly solves the equation
        \begin{equation}\label{Teste}
    \begin{cases}
        -\Delta u= \lambda \dfrac{ue^{\alpha_a |u|^{2}}}{|x|^{a}},  & \text{in } \Omega, \\
        u = 0, & \text{on } \partial \Omega
    \end{cases}
\end{equation}
where $\lambda\in\mathbb{R}$,
that is,
\begin{equation}\label{weak-solve-Def}
			\int_\Omega \nabla u \nabla v \, \mathrm{d}x = \lambda  \int_\Omega \dfrac{ue^{\alpha_a|u|^{2}}}{|x|^a} v \, \mathrm{d}x, \quad\mbox{for all}\;\; v\in   W^{1,2}_{0}(\Omega).
		\end{equation}
Then $u \in L^\infty(\Omega)$.	
	\end{lemma}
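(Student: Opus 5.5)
The plan is to show that the right-hand side $g(x):=\lambda u e^{\alpha_a u^2}|x|^{-a}$ lies in $L^p(\Omega)$ for some $p>1$. Elliptic regularity then gives $u\in W^{2,p}(\Omega)\hookrightarrow L^\infty(\Omega)$, since in dimension two $W^{2,p}$ embeds into $C^{0,\beta}(\overline\Omega)$ for every $p>1$. The difficulty is that $u$ is only known to lie in $W^{1,2}_0(\Omega)$, and we have no norm restriction beyond $\|u\|<\infty$. The Adimurthi–Sandeep inequality therefore only controls $e^{\beta u^2}$ for small $\beta$ relative to $\|u\|^{-2}$. The standard way around this is a truncation/splitting argument in the spirit of Brezis–Merle or of the Lions concentration lemma.

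\textbf{Step 1 (splitting).} Fix a large $L>0$ and write $u=u_L+w_L$, where $u_L=\max(\min(u,L),-L)$ and $w_L=u-u_L=\mathrm{sign}(u)(|u|-L)^+$. Then $w_L\in W^{1,2}_0(\Omega)$ and $\|\nabla w_L\|_2^2=\int_{\{|u|>L\}}|\nabla u|^2\to0$ as $L\to\infty$ by absolute continuity of the integral. Since $u^2\le (1+\delta)w_L^2+C_\delta L^2$, we get
\begin{equation*}
e^{\alpha_a u^2}\le C_{L,\delta}\, e^{(1+\delta)\alpha_a w_L^2}.
\end{equation*}
Choose exponents $p>1$, $r>1$ with $apr<2$, and pick $L$ so large that $(1+\delta)\alpha_a p r'\,\|\nabla w_L\|_2^2$ is as small as we like. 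Here $r'$ is the Hölder conjugate used to separate the polynomial factor $|u|^p$ from the exponential.

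\textbf{Step 2 ($L^p$ bound for $g$).} By Hölder's inequality,
\begin{equation*}
\int_\Omega |g|^p\,\mathrm{d}x\le C\Big(\int_\Omega|u|^{ps}\,\mathrm{d}x\Big)^{1/s}\Big(\int_\Omega \frac{e^{(1+\delta)\alpha_a p s' w_L^2}}{|x|^{aps'}}\,\mathrm{d}x\Big)^{1/s'}.
\end{equation*}
The first factor is finite by Sobolev embedding. For the second, set $\tilde w=w_L/\|\nabla w_L\|_2$. The exponent then becomes $\beta\tilde w^2$ with $\beta=(1+\delta)\alpha_a p s'\|\nabla w_L\|_2^2$. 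Let $\tilde a:=aps'<2$ (choose $p,s'$ close to $1$). Once $L$ is large we have $\beta\le \alpha_{\tilde a}=2\pi(2-\tilde a)$, so the singular Trudinger–Moser inequality of Adimurthi and Sandeep with weight $|x|^{-\tilde a}$ bounds this integral. Hence $g\in L^p(\Omega)$ for some $p>1$.

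\textbf{Step 3 (conclusion).} The function $u$ is a weak $W^{1,2}_0$ solution of $-\Delta u=g\in L^p$ on the smooth bounded domain $\Omega$. By uniqueness of weak solutions, it coincides with the $W^{2,p}\cap W^{1,p}_0$ solution given by Calderón–Zygmund estimates. Since $p>1=N/2$, the Sobolev–Morrey embedding gives $u\in L^\infty(\Omega)$. Alternatively one can avoid $W^{2,p}$ theory and run Stampacchia's truncation method, which needs only $g\in L^p$ with $p>N/2=1$.

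\textbf{Main obstacle.} The delicate point is Step 1: making sure the weighted exponent stays within the admissible range simultaneously for the singular weight and the exponential. This requires choosing the Hölder exponents so that $aps'<2$ while the tail energy $\|\nabla w_L\|_2^2$ absorbs all the multiplicative constants. This works precisely because the tail energy can be made arbitrarily small, even though the total energy of $u$ is arbitrary.
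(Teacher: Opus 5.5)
Your proposal is correct and follows essentially the same route as the paper: you show $\lambda u e^{\alpha_a u^2}|x|^{-a}\in L^p(\Omega)$ for some $p>1$ via H\"older and the singular Trudinger--Moser inequality, then invoke $W^{2,p}$ regularity and the embedding $W^{2,p}\hookrightarrow C^{0,\theta}(\overline{\Omega})$. The only difference is in how the integrability $\int_\Omega e^{rp\alpha_a u^2}|x|^{-rap}\,\mathrm{d}x<\infty$ is obtained: the paper simply asserts it, which, since $\|\nabla u\|_2$ is arbitrary, really rests on the qualitative ``well-definedness'' part of Adimurthi--Sandeep rather than the supremum bound, whereas your truncation $u=u_L+w_L$ with $\|\nabla w_L\|_2\to0$ derives it explicitly from the quantitative inequality.
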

	
	\begin{proof}
		Let $u\in W^{1,2}_{0}(\Omega)$ be a weak solution of \eqref{Teste}. Then
		\[
		-\Delta u = f(x)\quad \text{in } \Omega,
		\]
		where
		\[
		f(x)=\lambda \frac{u e^{\alpha_a |u|^2}}{|x|^a}.
		\]
		Since $u\in W^{1,2}_{0}(\Omega)$, the Sobolev embedding theorem yields
		\[
		u\in L^q(\Omega) \quad \text{for every } 1\le q<\infty .
		\]
		
		Let $p>1$ be sufficiently close to $1$ such that $ap<2$. By Hölder's inequality, for $r,r'>1$ with $1/r+1/r'=1$, we obtain
		\[
		\int_\Omega |f(x)|^p \mathrm{d}x 
		\le |\lambda|^p 
		\left(\int_\Omega |u|^{r'p} \mathrm{d}x\right)^{1/r'}
		\left(\int_\Omega \frac{e^{rp\alpha_a |u|^2}}{|x|^{rap}} \mathrm{d}x\right)^{1/r}.
		\]
		Since $u\in L^q(\Omega)$ for every $1\le q<\infty$, the first factor is finite. 
		On the other hand, choosing $r>1$ sufficiently close to $1$ so that $rap<2$, the singular Trudinger--Moser inequality \eqref{SF-Moser} ensures that
		\[
		\int_\Omega \frac{e^{rp\alpha_a |u|^2}}{|x|^{rap}}\,\mathrm{d}x < \infty.
		\]
		Consequently,
		$
		f\in L^p(\Omega)
		$
		for some $p>1$.  Now we can apply the standard elliptic regularity theory for the Dirichlet problem
		\[
		-\Delta u = f \quad \text{in } \Omega, \qquad u=0 \text{ on }\partial \Omega,
		\]
		with $f\in L^p(\Omega)$, which yields
		$
		u\in W^{2,p}(\Omega).
		$
		Since $p>1$ and the dimension is two, the Sobolev embedding theorem implies
		\[
		W^{2,p}(\Omega) \hookrightarrow C^{0,\theta}(\overline{\Omega})
		\]
		for some $\theta\in(0,1)$. In particular,
	$
		u\in L^\infty(\Omega)
		$
		and this completes the proof.
	\end{proof}

\section{Maximizer beyond the critical constant}
The proof of the Theorem~\ref{S-max} will be given in this section through a series of lemmas. Firstly, 
let us define
\begin{equation}\label{Sigma-sup}
   T(a,\Omega)=:\sup_{\Sigma}F_{\alpha_a}(u).
\end{equation}
\begin{lemma}\label{lemacompacto}
		The nonempty maximizer set $$
		K_{\alpha_a}=\{u \in \Sigma\,:\, F_{\alpha_a}(u)=T(a,\Omega)\}
		$$ is  compact.
	\end{lemma}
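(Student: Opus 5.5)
Nonemptiness of $K_{\alpha_a}$ is the Csató--Roy existence result \cite{Roy}. For compactness I argue sequentially. Take $(u_n)\subset K_{\alpha_a}$. Since $\|u_n\|=1$, after passing to a subsequence, $u_n\rightharpoonup u$ weakly in $W^{1,2}_0(\Omega)$, $u_n\to u$ in every $L^q(\Omega)$, and $u_n\to u$ a.e. In particular $\|u\|\le 1$.

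It suffices to show two things. First, $F_{\alpha_a}(u_n)\to F_{\alpha_a}(u)$. Second, $\|u\|=1$. Given these, weak convergence together with convergence of norms in the Hilbert space $W^{1,2}_0(\Omega)$ gives $u_n\to u$ strongly, so $u\in\Sigma$. Then $F_{\alpha_a}(u)=T(a,\Omega)$, hence $u\in K_{\alpha_a}$.

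The key tool is a concentration-compactness alternative of Lions type for the singular functional, which I would prove or take from \cite{Roy}. It says: if $u_n\rightharpoonup u$ on the unit sphere and the convergence $F_{\alpha_a}(u_n)\to F_{\alpha_a}(u)$ fails, then $u=0$ and $|\nabla u_n|^2\,\mathrm{d}x\rightharpoonup\delta_{x_0}$ for some $x_0$. The argument has two parts.

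\emph{Case $u\neq 0$.} Since $\|u\|>0$, we have $\|u_n-u\|^2\to 1-\|u\|^2<1$. Write $u_n^2\le(1+\epsilon)(u_n-u)^2+C_\epsilon u^2$ with $(1+\epsilon)(1-\|u\|^2)<1$. Combining the singular Trudinger--Moser inequality \eqref{SF-Moser} with Hölder's inequality, and using the integrability of $e^{cu^2}$ for every $c$ (as in the proof of Lemma~\ref{Lema-Frechet}), I get
\[
\sup_n\int_\Omega \frac{e^{p\alpha_a u_n^2}}{|x|^{ap}}\,\mathrm{d}x<\infty
\]
for some $p>1$ with $ap<2$. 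This gives uniform integrability, and Vitali's theorem then yields $F_{\alpha_a}(u_n)\to F_{\alpha_a}(u)$.

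\emph{Concentration away from the origin.} If $u=0$ and the energy concentrates at some $x_0\ne0$, the weight $|x|^{-a}$ is bounded near $x_0$. The situation is then Moser-type with the larger threshold $4\pi>\alpha_a$, which gives convergence of the integral near $x_0$. Away from $x_0$ the gradients vanish in $L^2$, which gives convergence there as well. In either sub-case the limit is $F_{\alpha_a}(0)=0$.

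It remains to exclude $u=0$. If $u=0$, the above shows that either $F_{\alpha_a}(u_n)\to 0$ or the sequence concentrates at the origin. In that concentrating case the value $\limsup F_{\alpha_a}(u_n)$ is bounded by the singular concentration level. By the Carleson--Chang type computation in \cite{Roy} this level is $\frac{\pi e}{(2-a)/2}\,r_\Omega(0)^{2-a}$ up to the paper's normalization, where $r_\Omega$ is the conformal radius. Csató and Roy prove that $T(a,\Omega)$ is \emph{strictly} larger than this level. This strict inequality is exactly what is used to show that maximizing sequences do not concentrate. Since $F_{\alpha_a}(u_n)=T(a,\Omega)>0$, both alternatives are impossible, so $u\neq0$. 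The first case then gives $F_{\alpha_a}(u)=T(a,\Omega)$.

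To get $\|u\|=1$: if $\|u\|<1$, set $\tilde u=u/\|u\|\in\Sigma$. Because $t\mapsto e^{t}$ is increasing, $F_{\alpha_a}(\tilde u)>F_{\alpha_a}(u)=T(a,\Omega)$, contradicting the definition of $T(a,\Omega)$. Hence $\|u\|=1$, and the strong convergence follows as explained above.

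The main obstacle is the concentration case, namely importing the strict inequality $T(a,\Omega)>$ (concentration level). I would quote this from \cite{Roy} rather than reprove it; everything else is routine Vitali-type arguments of the kind already used in Lemma~\ref{Lema-Frechet}.
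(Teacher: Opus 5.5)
Your argument is correct and follows the same route as the paper. Both use the Csat\'o--Roy concentration--compactness alternative. Both exclude concentration away from the origin (you sketch what the paper cites as \cite[Proposition~7]{Roy}) and concentration at the origin, via the strict inequality between $T(a,\Omega)$ and the concentration level. Both then use the normalization $u/\|u\|$ to force $\|u\|=1$, and norm convergence to get strong convergence.

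One caveat, shared by the paper: your step for concentration at $x_0\neq 0$ relies on $\alpha_a<4\pi$. For $a=0$ this gap disappears, and that step must instead use Flucher's strict inequality at every point \cite{Flucher}.
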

    \begin{proof}
      First, from \cite[Theorem~1]{Roy} we have $	K_{\alpha_a}\not=\emptyset$.  Let $(u_m)$ be a sequence in $K_{\alpha_{a}}$, that is, $\|u_m\|=1$ and $ F_{\alpha_a}(u_m)=T(a,\Omega)$. Thus, $
      (u_m)$ is a maximizing sequence, that is,
      \begin{equation}\label{u_m-maxi}
      	\lim_m F_{\alpha_a}(u_m)=T(a,\Omega).
      \end{equation}
      Up to a subsequence, we may assume that 
      $u_m \rightharpoonup u$ in $W^{1,2}_{0}(\Omega)$.
      Hence, by the concentration--compactness alternative \cite[Theorem~6]{Roy}, either
      \begin{itemize}
      	\item [$(i)$] $(u_m)$ concentrates at a point $x_0\in\overline{\Omega}$ or
      	\item [$(ii)$]  $\displaystyle\lim_{m\to\infty}F_{\alpha_a}(u_m)=F_{\alpha_a}(u)$.
      \end{itemize}
      If $(i)$ occurs for some $x_0\in\overline{\Omega}\setminus\left\{0\right\}$, \cite[Proposition~7]{Roy}  ensures that $\lim_m F_{\alpha_a}(u_m)\to F_{\alpha_a}(0)=0$ which contradicts \eqref{u_m-maxi}.  Therefore,  the only possibility for case $(i)$ to occur is with $x_0=0$. On the other hand, in this case, \cite[Theorem~16, Theorem~21, Proposition~22]{Roy} implies that 
      $$
      	\lim_{m\to\infty} F_{\alpha_a}(u_m)<T(a,\Omega),
      $$
      which contradicts \eqref{u_m-maxi}.  Then, we must have the alternative  $(ii)$. Hence, from \eqref{u_m-maxi} and  $(ii)$, we obtain $u\not\equiv0$ and $T(a,\Omega)=F_{\alpha_a}(u)$. From the weak lower semicontinuity of the norm, we also have $$0<\|u\|\le\liminf_{m\to\infty} \|u_m\|=1.$$
      Hence,  
      $T(a,\Omega)=F_{\alpha_a}(u)\le F_{\alpha_a}\Big(\frac{u}{\|u\|}\Big)\le T(a,\Omega).$
      It follows that 
      \begin{equation*}
      	\int_{\Omega}\frac{1}{|x|^a}\Big(e^{\alpha_a\left(\frac{|u|}{\|u\|}\right)^2}-e^{\alpha_a|u|^2}\Big)\mathrm{d}x=0.
      \end{equation*}
      Since $0<\|u\|\le 1$, the integrand above is nonnegative and, consequently, $\|u\|=1$. Thus, we obtain $u\in K_{\alpha_{a}}$ and $u_m\to u$ strongly in $W^{1,2}_{0}(\Omega) $.
    \end{proof}
In view of Lemma~\ref{lemacompacto}, for every $\varepsilon>0$, we set
$$
N_\varepsilon=\{u \in \Sigma:\; \mathrm{dist}(u, K_{\alpha_a})<\varepsilon\},
$$
where $\mathrm{dist}(u, K_{\alpha_a})=\inf\{\|u-v\|\,:\,v\in K_{\alpha_a}\}$. 
The family $(N_{\varepsilon})_{\varepsilon>0}$  constitutes a neighborhood basis for $K_{\alpha_a}$ in $\Sigma$, i.e.
\begin{enumerate}
	\item $ K_{\alpha_a}\subset N_\varepsilon$, for all $\varepsilon>0$;
	\item If $U$ is an open set in $\Sigma$ such that $K_{\alpha_a}\subset U$, then $ N_{\varepsilon}\subset U$ for some $\varepsilon>0$.
\end{enumerate}
\begin{lemma} For every $\varepsilon>0$ it holds
\begin{equation}\label{s-compare}
	T(a,\Omega)= \sup_{u\in N_\varepsilon}F_{\alpha_a}(u).
\end{equation}
\end{lemma}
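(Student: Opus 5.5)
This identity is essentially immediate from the inclusions $K_{\alpha_a}\subset N_\varepsilon\subset\Sigma$ together with the definition \eqref{Sigma-sup}. The plan is to prove the two inequalities separately.

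For the upper bound, every $u\in N_\varepsilon$ belongs to $\Sigma$ by the definition of $N_\varepsilon$. Hence $F_{\alpha_a}(u)\le \sup_{\Sigma}F_{\alpha_a}=T(a,\Omega)$, and taking the supremum over $N_\varepsilon$ gives
\[
\sup_{u\in N_\varepsilon}F_{\alpha_a}(u)\le T(a,\Omega).
\]
Note that $T(a,\Omega)<\infty$ by the Adimurthi--Sandeep inequality, since we work exactly at the critical parameter $\alpha_a$. So the supremum is a finite real number and the comparison is meaningful.

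For the lower bound, I invoke Lemma~\ref{lemacompacto}, or directly \cite[Theorem~1]{Roy}, to obtain some $u_0\in K_{\alpha_a}$, that is, $u_0\in\Sigma$ with $F_{\alpha_a}(u_0)=T(a,\Omega)$. Since $\mathrm{dist}(u_0,K_{\alpha_a})=0<\varepsilon$, we have $u_0\in N_\varepsilon$; this is property (1) of the neighborhood basis. Therefore
\[
\sup_{u\in N_\varepsilon}F_{\alpha_a}(u)\ge F_{\alpha_a}(u_0)=T(a,\Omega).
\]
Combining the two inequalities yields \eqref{s-compare}. In fact the supremum over $N_\varepsilon$ is attained, at any element of $K_{\alpha_a}$.

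The only step with any real content is the nonemptiness of $K_{\alpha_a}$, and that is supplied by the attainability result of Csató and Roy quoted in Lemma~\ref{lemacompacto}. Without it, one would instead argue with a maximizing sequence in $\Sigma$ and would need it to eventually enter $N_\varepsilon$, which requires the compactness and concentration analysis already carried out there. Given the earlier lemma, I do not expect any obstacle, and no $\varepsilon$-dependence enters the argument.
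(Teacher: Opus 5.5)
Your proof is correct and is essentially the paper's own argument. The upper bound comes from $N_\varepsilon\subset\Sigma$, and the lower bound from the nonemptiness of $K_{\alpha_a}$ (the attainability result of Csat\'o--Roy) combined with $K_{\alpha_a}\subset N_\varepsilon$.
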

\begin{proof}
First, we observe that   
$$
\sup_{u\in N_\varepsilon}F_{\alpha_a}(u)\le \sup_{u\in\Sigma}F_{\alpha_a}(u)= T(a,\Omega),  \quad\mbox{for all}\;\; \varepsilon>0.
$$
On the other hand, using that $T(a,\Omega)$ is attained, see \cite{Roy}, we get 
 $K_{\alpha_a}\not=\emptyset$ which together with $K_{\alpha_a}\subset N_\varepsilon$ yields  
\begin{equation}\label{s-compare}
	T(a,\Omega)=F_{\alpha_a}(u)=\sup_{u\in\Sigma}F_{\alpha_a}(u)\leq \sup_{u\in N_\varepsilon}F_{\alpha_a}(u).
\end{equation}
Thus, we obtain \eqref{s-compare} and this completes the proof.
\end{proof}

According to \eqref{s-compare}, the critical maximal value is preserved in a small neighborhood of the set of maximizers. Furthermore, by the inclusion $N_{2\varepsilon}\backslash N_\varepsilon\subset\Sigma$, one has
$$
\sup_{u\in N_{2\varepsilon}\backslash N_\varepsilon} F_{\alpha_a}(u)\le \sup_{u\in \Sigma} F_{\alpha_a}(u) =T(a,\Omega).
$$
We next prove that this inequality is, in fact, strict.
\begin{figure}[h!]
	\centering
	\begin{tikzpicture}[scale=1.5]
		
		\fill[blue!20]
		(0,0.5) .. controls (2,1.5) .. (4,0.5)
		-- (4,1) .. controls (2,2) .. (0,1)
		-- cycle;
		
		\fill[blue!20]
		(0,-0.5) .. controls (2,0.5) .. (4,-0.5)
		-- (4,-1) .. controls (2,0) .. (0,-1)
		-- cycle;
		
		\draw[thick,blue] 
		(0,0) 
		.. controls (2,1) .. 
		node[pos=-0.04] {$K_{\alpha_a}$}
		(4,0);
		
		\draw[thick] (0,0.5) .. controls (2,1.5) .. (4,0.5);
		\draw[dashed] (0,1) .. controls (2,2) .. (4,1);
		
		\draw[thick] (0,-0.5) .. controls (2,0.5) .. (4,-0.5);
		\draw[dashed] (0,-1) .. controls (2,0) .. (4,-1);
		
		\draw[decorate,decoration={brace}] (4.1,0.5) -- (4.1,0);
		\node[right] at (4.1,0.25) {$\varepsilon$};
		
		\draw[decorate,decoration={brace}] (4.5,0.9) -- (4.5,0);
		\node[right] at (4.5,0.5) {$2\varepsilon$};
		
	\end{tikzpicture}
	\caption{$N_{2\varepsilon}\setminus N_\varepsilon$.}
	\label{fig:vizinhanca-kmu}
\end{figure}

\begin{lemma}\label{supercriticocritico} For every $\varepsilon>0$, the following strict inequality holds:
	\begin{equation}\label{maximorestrito1}
		\sup_{u\in N_{2\varepsilon}\backslash N_\varepsilon} F_{\alpha_a}(u)<T(a,\Omega)=\sup_{u\in N_\varepsilon}F_{\alpha_a}(u).
	\end{equation}
\end{lemma}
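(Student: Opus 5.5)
Since the equality on the right is \eqref{s-compare}, only the strict inequality on the left needs proof. I argue by contradiction. Suppose that for some $\varepsilon>0$ we have $\sup_{N_{2\varepsilon}\setminus N_\varepsilon}F_{\alpha_a}=T(a,\Omega)$. (If $N_{2\varepsilon}\setminus N_\varepsilon$ is empty the statement holds with the usual convention, so assume it is not.) Then there is a sequence $(u_m)\subset N_{2\varepsilon}\setminus N_\varepsilon$ with $F_{\alpha_a}(u_m)\to T(a,\Omega)$. Each $u_m$ satisfies $\|u_m\|=1$ and
$$\varepsilon\le \mathrm{dist}(u_m,K_{\alpha_a})<2\varepsilon,$$
so $(u_m)$ is a maximizing sequence for $T(a,\Omega)$ on $\Sigma$.

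The key step is to show that every maximizing sequence on $\Sigma$ has a subsequence converging strongly in $W^{1,2}_0(\Omega)$ to an element of $K_{\alpha_a}$. This is exactly what the proof of Lemma~\ref{lemacompacto} establishes. That argument never used $F_{\alpha_a}(u_m)=T(a,\Omega)$ for each $m$; it used only \eqref{u_m-maxi}, i.e.\ that the sequence is maximizing. I would repeat it verbatim:
\begin{enumerate}
\item Pass to a weakly convergent subsequence $u_m\rightharpoonup u$.
\item Apply the concentration--compactness alternative \cite[Theorem~6]{Roy}.
\item Rule out concentration at $x_0\neq 0$ by \cite[Proposition~7]{Roy}.
\item Rule out concentration at $0$ by \cite[Theorem~16, Theorem~21, Proposition~22]{Roy}, which show that the concentration level is strictly below $T(a,\Omega)$.
\item Conclude $F_{\alpha_a}(u)=T(a,\Omega)$ and $u\not\equiv 0$.
\item Deduce $\|u\|=1$ from the monotonicity argument with $u/\|u\|$.
\end{enumerate}
Weak convergence together with $\|u_m\|\to\|u\|$ gives strong convergence $u_m\to u$ in the Hilbert space $W^{1,2}_0(\Omega)$, and $u\in K_{\alpha_a}$.

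Then $\mathrm{dist}(u_m,K_{\alpha_a})\le\|u_m-u\|\to 0$, which contradicts $\mathrm{dist}(u_m,K_{\alpha_a})\ge\varepsilon$ for all $m$. This proves the strict inequality.

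The main obstacle is the step ruling out concentration at the origin. It requires the strict inequality between the concentration level $\limsup F_{\alpha_a}(u_m)$ and $T(a,\Omega)$, which is the deep part of \cite{Roy}: a Carleson--Chang type computation of the concentration level combined with a test-function construction exceeding it. I would take this from \cite{Roy} as in Lemma~\ref{lemacompacto}. One must check that those results apply to an arbitrary normalized sequence concentrating at $0$, not only to sequences of maximizers. This holds because the concentration-level bound in \cite{Roy} is stated for arbitrary normalized concentrating sequences.
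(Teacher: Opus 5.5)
Your proposal is correct and is essentially the paper's proof. You take a maximizing sequence in $N_{2\varepsilon}\setminus N_\varepsilon$, rerun the concentration--compactness argument of Lemma~\ref{lemacompacto} (which indeed only uses that the sequence is maximizing) to get strong convergence to some $u\in K_{\alpha_a}$, and contradict $\mathrm{dist}(u_m,K_{\alpha_a})\ge\varepsilon$; your aside on the empty annulus, which does occur for large $\varepsilon$ since $\mathrm{dist}(\cdot,K_{\alpha_a})\le 2$ on $\Sigma$, is a harmless detail the paper leaves implicit.
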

\begin{proof}
    By contradiction, assume that there exists $\varepsilon_0>0$ such that  
    \begin{equation}\nonumber
    	\sup_{u\in N_{2\varepsilon_0}\backslash N_{\varepsilon_0}} F_{\alpha_a}(u)=T(a,\Omega).
    \end{equation}
   Let $(u_m)\subset N_{2\varepsilon_0}\setminus N_{\varepsilon_0}$ be a sequence such that $ F_{\alpha_a}(u_m)\to T(a,\Omega)$. Up to a subsequence we may assume that $ u_m \rightharpoonup u$ in $ W^{1,2}_{0}(\Omega)$. Arguing as in the proof of Lemma~\ref{lemacompacto} and applying   \cite[Theorem~6, Proposition~7, Theorem~16, Theorem~21, Proposition~22]{Roy}, we are able to obtain that
    \begin{equation}\label{J(u)=b}
    	\|u\|=1
    	\quad \text{and} \quad
    	F_{\alpha_a}(u)=T(a,\Omega).
    \end{equation}
    As a consequence, $u_m \to u$ strongly in $W^{1,2}_{0}(\Omega)$ and  $u\in K_{\alpha_a}$. Thus, 
    \[
   0= \mathrm{dist}(u,K_{\alpha_a})
    =\lim_{m\to\infty}\mathrm{dist}(u_m,K_{\alpha_a})
    \ge \varepsilon_0>0,
    \]
    which is a contradiction and this completes the proof.
\end{proof}
Next, we show that the strict inequality \eqref{maximorestrito1} still holds in the supercritical regime $\alpha>\alpha_a$, at least for $\alpha$ close to $\alpha_a$.
\begin{lemma}\label{supercriticoalém}
		There exists $\alpha^{\sharp}> \alpha_a$ such that for every $\alpha \in [\alpha_a, \alpha^{\sharp})$, we have
		\begin{equation}\label{maximorestritomu}
			\sup_{N_{2\varepsilon}\backslash N_\varepsilon} F_{\alpha}(u)<F^\sharp_{\alpha}:=\sup_{u\in N_\varepsilon}F_{\alpha}(u),
		\end{equation}
	for all $ \varepsilon>0$ small enough. 	
	\end{lemma}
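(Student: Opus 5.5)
Fix $\varepsilon>0$ small enough that $N_{2\varepsilon}$ is a good neighborhood of $K_{\alpha_a}$; the quantifiers will be read as ``there exist $\varepsilon_0>0$ and $\alpha^\sharp=\alpha^\sharp(\varepsilon)>\alpha_a$ with \eqref{maximorestritomu} for $\alpha\in[\alpha_a,\alpha^\sharp)$''. The plan is a perturbation argument from Lemma~\ref{supercriticocritico}. Set
\[
\delta:=T(a,\Omega)-\sup_{N_{2\varepsilon}\setminus N_\varepsilon}F_{\alpha_a}>0 .
\]
It then suffices to find $\alpha^\sharp>\alpha_a$ such that
\[
\sup_{u\in N_{2\varepsilon}}|F_\alpha(u)-F_{\alpha_a}(u)|<\delta/2 \quad\text{for all }\alpha\in[\alpha_a,\alpha^\sharp).
\]
Indeed, this gives $\sup_{N_{2\varepsilon}\setminus N_\varepsilon}F_\alpha<T(a,\Omega)-\delta/2$. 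On the other hand, $F_\alpha\ge F_{\alpha_a}$ pointwise for $\alpha\ge\alpha_a$, so $F^\sharp_\alpha\ge F_{\alpha_a}(u_0)=T(a,\Omega)$ for any $u_0\in K_{\alpha_a}$, and \eqref{maximorestritomu} follows. Note that $F^\sharp_\alpha$ need not be finite a priori, but the lower bound is all that is needed.

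The key step, and the main obstacle, is this uniform closeness of $F_\alpha$ to $F_{\alpha_a}$ on $N_{2\varepsilon}$. Globally on $\Sigma$ it fails, since $F_\alpha$ is unbounded there, so the compactness of $K_{\alpha_a}$ (Lemma~\ref{lemacompacto}) together with the boundedness of its elements (Lemma~\ref{regularity}) must be used. Each $u_0\in K_{\alpha_a}$ is, by the Lagrange multiplier rule and Lemma~\ref{Lema-Frechet}, a weak solution of \eqref{Teste}, hence $u_0\in L^\infty(\Omega)$. Moreover $\sup_{K_{\alpha_a}}\|u_0\|_\infty<\infty$. To see this, run the bootstrap of Lemma~\ref{regularity} uniformly: strong compactness of $K_{\alpha_a}$ gives uniform bounds on $\int e^{q\alpha_a u_0^2}|x|^{-aq}$ (via Vitali and the convergence $u_m\to u$ in $W^{1,2}_0$). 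The multipliers $\lambda$ are also uniformly bounded, because $\lambda^{-1}=\int u_0^2e^{\alpha_a u_0^2}|x|^{-a}\ge \int u_0^2|x|^{-a}\ge c>0$ on the compact set. Hence the $W^{2,p}$ and $C^{0,\theta}$ bounds are uniform, giving some $M$ with $\|u_0\|_\infty\le M$ on $K_{\alpha_a}$.

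Now take $u\in N_{2\varepsilon}$. Choose $u_0\in K_{\alpha_a}$ with $\|u-u_0\|<2\varepsilon$ and write $u=u_0+w$. Then, for any $\eta>0$,
\[
\alpha u^2\le \alpha(1+\eta)w^2+\alpha(1+\eta^{-1})M^2 .
\]
Choosing $\eta=1$, $\alpha\le 2\alpha_a$ and $\varepsilon$ small so that $2\alpha q\gamma(2\varepsilon)^2\le\alpha_a$, the inequality of Adimurthi--Sandeep yields
\[
\sup_{u\in N_{2\varepsilon},\ \alpha\le 2\alpha_a}\int_\Omega \frac{e^{q\alpha u^2}}{|x|^{aq}}\,\mathrm{d}x\le C
\]
for some $q>1$ with $aq<2$. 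Finally, use the mean value bound
\[
|F_\alpha(u)-F_{\alpha_a}(u)|\le(\alpha-\alpha_a)\int_\Omega\frac{u^2e^{\alpha u^2}}{|x|^a}\,\mathrm{d}x .
\]
Estimating the integral by H\"older (absorbing $u^2$ into a slightly larger exponent, as in \eqref{estimate-JL}) bounds it by $C(\alpha-\alpha_a)$ uniformly on $N_{2\varepsilon}$. Take $\alpha^\sharp=\min\{2\alpha_a,\ \alpha_a+\delta/(2C)\}$.

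The delicate points are these. First, the uniform $L^\infty$ bound on $K_{\alpha_a}$; if the uniform regularity bootstrap proves troublesome, use instead a finite $\varepsilon$-net of $K_{\alpha_a}$ by compactness, which only requires Lemma~\ref{regularity} for finitely many functions. Second, the order of choices: $\varepsilon$ is chosen first, small in terms of $\alpha_a$ and $a$ only, then $\delta=\delta(\varepsilon)$ from Lemma~\ref{supercriticocritico}, then $\alpha^\sharp$. This is consistent with ``for all $\varepsilon>0$ small enough'', with $\alpha^\sharp$ allowed to depend on $\varepsilon$. In the subsequent use one fixes such an $\varepsilon$.
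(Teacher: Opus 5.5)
Your proof is correct. Its skeleton matches the paper's: you take the strict gap $\delta(\varepsilon)$ from Lemma~\ref{supercriticocritico} and show $F_\alpha$ is uniformly close to $F_{\alpha_a}$ on $N_{2\varepsilon}$, with $\alpha^\sharp$ depending on $\varepsilon$. That quantifier order is also what the paper's proof actually delivers. The difference lies in how you obtain the uniform closeness.

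The paper uses no regularity at all. It writes $F_\alpha(u)=F_{\alpha_a}((\alpha/\alpha_a)^{1/2}u)$ and notes that $F_{\alpha_a}$ is uniformly continuous on small balls (Lemma~\ref{Lema-Frechet}). It then covers the compact set $K_{\alpha_a}$ by finitely many such balls. The dilation moves $u$ by only $|1-(\alpha/\alpha_a)^{1/2}|\,\|u\|$, so $u$ stays in the same ball. This gives $\sup_U|F_\alpha-F_{\alpha_a}|\to 0$ as $\alpha\downarrow\alpha_a$ on a neighborhood $U\supset N_{2\varepsilon}$.

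You go a different way. You use the Euler--Lagrange equation and an $L^\infty$ bound on $K_{\alpha_a}$, obtained either by a uniform bootstrap with controlled multipliers or by a finite net plus Lemma~\ref{regularity}. You then split $u=u_0+w$, apply the Adimurthi--Sandeep inequality, and finish with the mean value bound. This is essentially the computation the paper postpones to the proof of Theorem~\ref{S-max}.

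Your route needs more machinery, but it gives more:
\begin{itemize}
\item a quantitative Lipschitz estimate $\sup_{N_{2\varepsilon}}|F_\alpha-F_{\alpha_a}|\le C(\alpha-\alpha_a)$;
\item an explicit smallness condition on $\varepsilon$ depending only on $a$;
\item finiteness of $F^\sharp_\alpha$ for free.
\end{itemize}
The paper's argument is softer and avoids the Euler--Lagrange equation, but it is purely qualitative.

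One small correction. For the weight $|x|^{-aq}$ (respectively $|x|^{-aq\gamma}$), the admissible Adimurthi--Sandeep exponent is $2\pi(2-aq)$ (respectively $2\pi(2-aq\gamma)$), not $\alpha_a$. Your condition on $\varepsilon$ should use that threshold. This is harmless, since shrinking $\varepsilon$ absorbs it.
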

    \begin{proof}
    By Lemma~\ref{Lema-Frechet},  we know that the functional  $F_{\alpha}$ is locally uniformly  continuous.  For each $v \in K_{\alpha_a}$, let $U(v;\varepsilon_v)=\{u\in W^{1,2}_{0}(\Omega)\;: \; \|u-v\|<\varepsilon_v\}$ be a neighborhood of $v$ such that  $F_{\alpha}$ is uniformly continuous. Since by Lemma~\ref{lemacompacto} $K_{\alpha_a}$ is compact, there exist  $v_1,\cdots, v_m$ such that  
    $$
    K_{\alpha_a} \subset \bigcup_{i=1}^m U(v_i; \frac{\varepsilon_i}{2}).
    $$
    Set  $U= \bigcup_{i=1}^m U(v_i; \frac{\varepsilon_i}{2}) $. Using that  $F_{\alpha}$ is uniformly continuous on each $U(v_i; \varepsilon_i)$, given $\varepsilon_0>0$ there exists $\sigma>0$ such that
    \begin{equation}\label{uniform-IH}
    	u, v\in U(v_i; \varepsilon_i), \; \|u-v\|<\sigma\;\;\mbox{implies}\;\; |F_{\alpha}(u)-F_{\alpha}(v)|<\varepsilon_0,
    \end{equation}
     for all $i=1,\cdots, m$.
    Set $\varepsilon_M=\max\{\varepsilon_1,\cdots, \varepsilon_m\}$. For $\alpha, \beta>0$, let us choose $\delta>0$ such that 
    \begin{equation}\label{delta-choice}
    	|\alpha-\beta|<\delta\;\;\mbox{implies}\;\;  \Big|1-\Big(\frac{\beta}{\alpha}\Big)^{\frac{1}{2}}\Big|\left(1+\frac{\varepsilon_M}{2}\right)<\min\left\{\frac{\varepsilon_1}{2}, \cdots, \frac{\varepsilon_m}{2},\sigma\right\}.
    \end{equation}
    Now, if $u \in U$  then $u\in U(v_{i}; \frac{\varepsilon_{i}}{2})$ for some $i \in \{1, \cdots, m\}$. From our choice of $\delta$ in \eqref{delta-choice}, for $|\mu-\nu|<\delta$  we obtain 
    \begin{equation*}
    	\begin{aligned}
    		\Big\|u - \Big(\frac{\beta}{\alpha}\Big)^{\frac{1}{2}}u\Big\|=
    		\Big|1-\Big(\frac{\beta}{\alpha}\Big)^{\frac{1}{2}}\Big|\|u\|
    		\leq&  \Big|1-\Big(\frac{\beta}{\alpha}\Big)^{\frac{1}{2}}\Big|\left(\frac{\varepsilon_i}{2}+\|v_i\|\right)\\
    		\leq& \Big|1-\Big(\frac{\beta}{\alpha}\Big)^{\frac{1}{2}}\Big|\left(\frac{\varepsilon_M}{2}+1\right)<\sigma 
    	\end{aligned}
    \end{equation*}
    and 
    \begin{equation*}
    	\begin{aligned}
    		\Big\|v_i - \Big(\frac{\beta}{\alpha}\Big)^{\frac{1}{2}}u\Big\|& \leq\|v_i-u\|+\Big\|u - \Big(\frac{\beta}{\alpha}\Big)^{\frac{1}{2}}u \Big\|\\
    		& \leq \frac{\varepsilon_i}{2}+ \Big|1-\Big(\frac{\beta}{\alpha}\Big)^{\frac{1}{2}}\Big|\left(\frac{\varepsilon_M}{2}+1\right)\\&<\varepsilon_i. 
    	\end{aligned}
    \end{equation*}
    Thus, we conclude that
    $$
    \Big(\frac{\beta}{\alpha}\Big)^{\frac{1}{2}}u\in U(v_{i}; \varepsilon_{i})\;\;\mbox{and}\;\; \Big\|u-\Big(\frac{\beta}{\alpha}\Big)^{\frac{1}{2}}u\Big\|<\sigma.
    $$ 
    From \eqref{uniform-IH}, it follows that 
    \begin{equation}\label{uniform-Iconti}
    	\begin{aligned}
    		|\alpha-\beta|<\delta \;\;\mbox{implies}\;\; |F_{\alpha}(u)-F_{\beta}(u)|=\Big|F_{\alpha}(u)-F_{\alpha}\Big(\Big(\frac{\beta}{\alpha}\Big)^{\frac{1}{2}}u\Big)\Big|<\varepsilon_0.
    	\end{aligned}
    \end{equation}
    Consequently, the map $\alpha\mapsto F_{\alpha}(u)$ is continuous, uniformly with respect to  $u\in U$. Now, let us take $\varepsilon>0$ small such that $N_{2\varepsilon}\subset U$. From \eqref{maximorestrito1}, we are able to choose $\varepsilon_0>0$ such that 
    \begin{equation}\label{maximorestrito12}
    	\sup_{u\in N_{2\varepsilon}\backslash N_\varepsilon} F_{\alpha_a}(u)<\sup_{u\in N_\varepsilon}F_{\alpha_a}(u)-2\varepsilon_0.
    \end{equation}
    For $\beta=\alpha_{a}$ and such choice of $\varepsilon_0$, take $\delta>0$ such that \eqref{uniform-Iconti} holds. Hence, from  \eqref{uniform-Iconti} and \eqref{maximorestrito12}, for all $\alpha\in [\alpha_{a},\alpha_{a}+\delta)$ we obtain
    \begin{equation*}
    	\begin{aligned}
    		\sup_{u\in N_{2\varepsilon}\backslash N_\varepsilon} F_{\alpha}(u)\le \varepsilon_0+ \sup_{u\in N_{2\varepsilon}\backslash N_\varepsilon} F _{\alpha_{a}}(u)
    		&< \sup_{u\in N_\varepsilon}F_{\alpha_a}(u)-\varepsilon_0
    		&\le \sup_{u\in N_\varepsilon}F_{\alpha}(u),
    	\end{aligned} 
    \end{equation*}
    which completes the proof.
\end{proof}
    \begin{proof}[Proof of Theorem~\ref{S-max} ] Let $\alpha^{\sharp}>\alpha_a$ be given by Lemma~\ref{supercriticoalém} and $F^\sharp_{\alpha}$ as in \eqref{maximorestritomu}.
     Now, for  $\alpha\in[\alpha_{a}, \alpha^{\sharp})$, let $(u_m) \subset N_\varepsilon$ be a maximizing sequence for $F^\sharp_{\alpha}$, i.e.   $F_{\alpha}(u_m) \to F^\sharp_{\alpha}$ as $m\to\infty$, which we can assume that $u_m \rightharpoonup u$ weakly in $W^{1,2}_{0}(\Omega)$ and $u_m(x)\to u(x)$ a.e in $\Omega$. For each $m\in \mathbb{N}$, let $(v_m) \subset K_{\alpha_a}$ satisfying $\|u_m-v_m\|\le \varepsilon$. By compactness, up to  a subsequence, we have $v_m \to v$ strongly in $W^{1,2}_{0}(\Omega)$ for some $v\in K_{\alpha_{a}}$. In particular,  $v$ is a solution of $\eqref{Teste}$ and the Lemma~\ref{regularity} yields $v \in L^\infty(\Omega)$.  Set $w_m=u_m-v_m+v$ and  note that $w_m-u_m \to 0$ strongly in $W^{1,2}_{0}(\Omega)$. For $a,b>0$, Young's inequality yields
     $$
     (a+b)^2\leq (1+\delta)a^2+(1+\frac{1}{\delta})b^2, \quad\mbox{for every}\, \delta>0.
     $$
Picking $\delta=\sqrt{2}-1$, 
    it follows that 
    \begin{equation}\label{e-convex}
    |w_m|^{2} \le (|v|+|u_m-v_m|)^{2}
    		\le \sqrt{2}\|v\|^2_{L^{\infty}}+\frac{\sqrt{2}}{\sqrt{2}-1}\varepsilon^2\Big|\frac{u_m-v_m}{\|u_m-v_m\|}\Big|^{2}.
  \end{equation}
  Fix $q>1$ (close to $1$) with $aq<2$.  Then, choose $\varepsilon>0$ small enough to ensure that $\alpha_{\epsilon,q}:=q\alpha\frac{\sqrt{2}}{\sqrt{2}-1} \varepsilon^2\le \alpha_{aq}=2\pi(2-aq)$ for all $\alpha\in [\alpha_{a}, \alpha^{\sharp})$.  From  \eqref{e-convex}  we obtain
    \begin{align*}
    \int_{\Omega}\frac{e^{q\alpha |w_m|^2}}{|x|^{aq}}{\rm d}x& \le  e^{q\alpha\sqrt{2}\|v\|^2_{L^{\infty}}}\int_{\Omega}\frac{e^{\alpha_{\epsilon,q}\big|\frac{u_m-v_m}{\|u_m-v_m\|}\big|^{2}}}{|x|^{aq}}{\rm d}x\\
&\le \ e^{q\alpha\sqrt{2}\|v\|^2_{L^{\infty}}}\int_{\Omega}\frac{e^{\alpha_{aq}\big|\frac{u_m-v_m}{\|u_m-v_m\|}\big|^{2}}}{|x|^{aq}}{\rm d}x.
    \end{align*}
   It follows from the singular Trudinger-Moser inequality in \cite{AS}  that  $(\exp(\alpha|w_m|^{2})-1)/|x|^a$ is uniformly bounded in $L^q(\Omega)$ for some $q>1$.
   
   Thus, the Vitali convergence theorem implies $F_{\alpha}(w_m) \to F_{\alpha}(u)$ and the local uniform continuity of $F_\alpha$ yields
   $F_{\alpha}(w_m)-F_{\alpha}(u_m) \to 0$. Hence 
    \begin{align*}
    F^{\sharp}_\alpha=\lim_{m\to\infty}[(F_{\alpha}(u_m)-F_{\alpha}(w_m))+F_{\alpha}(w_m)]=	F_{\alpha}(u).
    \end{align*}
     In addition, by the lower semi-continuity of the norm  we have $\|u-v\| \leq \varepsilon$ and using $\|v\|= 1$, we obtain
    \begin{equation*}
    	\begin{aligned}
    		\left\|v-\frac{u}{\|u\|}\right\|&\leq
    		\|v-u\|+|1-\|u\||\\
    		&=\|v-u\|+|\|v\|-\|u\||\\
    		&\leq 2 \|v-u\| \leq 2 \varepsilon. 
    	\end{aligned}
    \end{equation*}
    This implies that $\frac{u}{\|u\|} \in \overline{N}_{2\varepsilon}$ and from \eqref{maximorestritomu}, we obtain  
    $F_{\alpha}\left(\frac{u}{\|u\|}\right) \leq F^{\sharp}_{\alpha} = F_{\alpha}(u).$
    It follows that $\|u\| = 1$. Consequently, $u \in \Sigma$ which implies $u \in \overline{N}_\varepsilon$. In fact, by \eqref{maximorestritomu}, we must have  $u \in N_{\varepsilon}$, and hence  $F^\sharp_{\alpha}$ is attained in $N_\varepsilon$. 
\end{proof}
\bibliographystyle{abbrv}
\bibliography{reference}

\end{document}